\def\Forb{\mathop{\mathrm{Forb_{e}}}\nolimits}
\theoremstyle{definition}
\newtheorem{definition}{Definition}[section]
\newtheorem*{remark*}{Remark}
\newtheorem*{claim*}{Claim}
\theoremstyle{remark}
\theoremstyle{plain}
\newtheorem{theorem}{Theorem}[section]
\newtheorem{prop}[theorem]{Proposition}
\newtheorem{observation}[theorem]{Observation}
\newtheorem{problem}[theorem]{Problem}
\def\str#1{\mathbf {#1}}
\def\rel#1#2{R_{\mathbf{#1}}^{#2}}
\def\nbrel#1#2{R_{#1}^{#2}}
\def\rel#1#2{R_{\mathbf{#1}}^{#2}}
\def\func#1#2{f_{\mathbf{#1}}^{#2}}
\begin{document}
\title{Type-respecting amalgamation and big Ramsey degrees}
\authors{
	\author[A.~Aranda et al.]{Andr\'es Aranda}
	\author[]{Saumuel Braunfeld}
	\author[]{David Chodounsk\'y}
	\author[]{Jan Hubi\v cka}
	\author[]{Mat\v ej Kone\v cn\'y}
	\address{Department of Applied Mathematics (KAM)\\ Charles University\\ Prague, Czech Republic}
	\email{\{aranda|braunfeld|chodounsky|hubicka|matej\}@kam.mff.cuni.cz}
	\author[]{Jaroslav Ne\v set\v ril}
	\address{Computer Science Institute (IUUK)\\ Charles University\\ Prague, Czech Republic}
	\email{nesetril@iuuk.mff.cuni.cz}
	\author[]{Andy Zucker}
	\address{Department of Pure Mathematics\\ University of Waterloo\\ Canada}
	\email{a3zucker@uwaterloo.ca}
%\author{Andres Ara\inst{1} \and Samuel Braunfeld\inst{1} \and David Chodounsk\'y\inst{1} \and Jan Hubi\v cka\inst{1} \and\\
%Mat\v ej Kone\v cn\'y\inst{1} \and Jaroslav Ne\v set\v ril\inst{1} \and Andy Zucker\inst{2}}
%\institute{Department of Applied Mathematics (KAM), Charles University, Ma\-lo\-stranské~nám\v estí 25, Praha 1, Czech Republic,\\
%\email{\{chodounsky,hubicka,matej\}@kam.mff.cuni.cz},
%\and
%University of California San Diego, USA,\\
%\email{zucker@ucsd.edu}
%}
}
\maketitle              % typeset the title of the contribution
\begin{abstract}
	\vskip-1cm
	We give an infinitary extension of the Ne\v set\v ril--R\"odl theorem for category of relational structures with  special type-respecting embeddings.
\end{abstract}
\section{Introduction}
We use the standard model-theoretic notion of structures allowing functions to be partial.
Let $L$ be a \emph{language} with relational symbols $\rel{}{}\in L$ and functional symbols $f\in L$ each having its {\em arity}.
An \emph{$L$-structure} $\str{A}$ on $A$ is a structure with {\em vertex set} $A$, relations $\rel{A}{}\subseteq A^r$ for every relation symbol $\rel{}{}\in L$ of arity $r$ and partial functions $\func{A}{}\colon A^s\to A$ for every function symbol $\func{}{}\in L$ of arity $s$.  If the set $A$ is finite say that $\str A$ is \emph{finite} (it may still have infinitely many relations if $L$ is infinite). We consider only $L$-structures with finitely many or countably infinitely many vertices.
Language $L$ is \emph{relational} if it contains no function symbols. We say that $\str{A}$ is a \emph{substructure} of $\str{B}$ and write $\str{A}\subseteq \str{B}$ if the identity map is an embedding $\str{A}\to\str{B}$.
Let $\mathcal K$ be a class of $L$-structures.  We say that $\mathcal K$ is \emph{hereditary} if it is closed for substructures.
We say that $L$-structure $\str{U}\in \mathcal K$ is \emph{$\mathcal K$-universal} if every $L$-structure in $\mathcal K$ embeds to $\str{U}$.

Given $L$-structures $\str{A}$ and $\str{B}$, we denote by $\str{B}\choose \str{A}$ the set
of all embeddings from $\str{A}$ to $\str{B}$. We write $\str{C}\longrightarrow (\str{B})^\str{A}_{k,l}$ to denote the following statement:
for every colouring $\chi$ of $\str{C}\choose\str{A}$ with $k$ colours, there exists
an embedding $f\colon\str{B}\to\str{C}$ such that $\chi$ does not take more than $l$ values on $f(\str{B})\choose \str{A}$.
For a countably infinite $L$-structure $\str{B}$ and its finite substructure $\str{A}$, the \emph{big Ramsey degree} of $\str{A}$ in $\str{B}$ is
the least number $D\in \mathbb N\cup \{\infty\}$ such that $\str{B}\longrightarrow (\str{B})^\str{A}_{k,D}$ for every $k\in \mathbb N$.
We say that $L$-structure $\str{B}$ has \emph{finite big Ramsey degrees} if the big Ramsey degree of every finite substructure $\str{A}$ of $\str{B}$ is finite.

In general, we are interested in the following question: Given a hereditary class of $L$-structures $\mathcal K$, do $\mathcal K$-universal $L$-structures in $\mathcal K$ have finite big Ramsey degrees? Notice that if one $\mathcal K$-universal $L$-structure in $\mathcal K$ has finite big Ramsey degrees then all of them do.
The study of big Ramsey degrees originated in 1960's Laver's unpublished proof that the big Ramsey degrees of the
order of rationals are finite. This result was refined and a precise formula was obtained by Devlin~\cite{devlin1979}. This area has recently been revitalized with a rapid
progress regarding big Ramsey degrees of structures in finite binary languages
%~\cite{dobrinen2017universal,dobrinen2019ramsey,Hubicka2020CS,Hubickabigramsey2,BalkoForbiddenCycles}. 
(see e.g. recent survey~\cite{dobrinen2021ramsey}).

We call $L$-structure $\str{A}$ \emph{irreducible} if for every pair of vertices $u,v\in A$ there exist
a relational symbol $R\in L$ and a tuple $\vec{t}\in \rel{A}{}$ such that $u,v\in \vec{t}$ (this is an easy generalization of the
notion of graph clique).  Given set of $L$-structures $\mathcal F$, $L$-structure $\str{A}$ is \emph{$\mathcal F$-free} if there there is no $\str{F}\in \mathcal F$ with an
embedding $\str{F}\to \str{A}$. The class of all (finite and countably infinite) $\mathcal F$-free $L$-structures is denoted by $\Forb(\mathcal F)$.
With these definitions we can state a recent result:
\begin{theorem}[Zucker~\cite{zucker2020}]
	\label{thm:zucker}
	Let $L$ be a finite binary relational language, and $\mathcal F$ a finite set of finite irreducible $L$-structures.
	Then every $\Forb(\mathcal F)$-universal $L$-structure has finite big Ramsey degrees. (In other words, for every finite substructure $\str{A}$ of $\str{U}$
	there exists finite $D=D(\str{A})$ such that $\str{U}\longrightarrow (\str{U})^\str{A}_{k,D}$ for every $k>1$.)
\end{theorem}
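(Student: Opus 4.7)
\bigskip
\noindent\textbf{Proof proposal.}

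The plan is to realize the universal structure as a Fra\"iss\'e limit, code it by an infinite labeled tree, and then apply a Milliken/Halpern--L\"auchli type pigeonhole on the tree. First I would reduce to the case that $\str{U}$ is the Fra\"iss\'e limit of the class $\mathcal K$ of finite structures in $\Forb(\mathcal F)$. Since $L$ is binary and every $\str{F}\in\mathcal F$ is irreducible, $\mathcal K$ has the strong amalgamation property (free amalgamation over the common part avoids any irreducible forbidden structure), so a Fra\"iss\'e limit exists; any $\Forb(\mathcal F)$-universal structure contains it and is contained in it, so having finite big Ramsey degrees is preserved.

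Next I would fix an enumeration $\langle v_n:n<\omega\rangle$ of $\str{U}$ and build the associated \emph{coding tree} $\mathbb T\subseteq S^{<\omega}$, where $S$ is the (finite) set of quantifier-free $1$-types over the empty structure, extended level-by-level to record the q.f.\ $1$-type of each new vertex over $\{v_0,\dots,v_{n-1}\}$. Because $L$ is binary, the $1$-type of a vertex $w$ over an initial segment is determined by a sequence of binary types with each $v_i$, so nodes of $\mathbb T$ are naturally elements of a finite-alphabet tree; the irreducibility of the forbidden structures guarantees that every consistent finitary extension of such a branch is realized in $\str{U}$ (this is the Nesetril--R\"odl style amalgamation). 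Each embedding $e\colon\str{A}\to\str{U}$ induces a finite ``diagonal antichain'' in $\mathbb T$ together with the passing types of the non-$\str{A}$ levels; declare two embeddings \emph{similar} if they produce the same antichain shape and the same passing-type data. A straightforward combinatorial bound then shows the number of similarity types of embeddings of a given finite $\str{A}$ is finite.

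The core step is a Ramsey theorem for $\mathbb T$: for any finite colouring of the embeddings $\str{A}\to\str{U}$, one can find an embedded copy $\str{U}'\cong\str{U}$ inside $\str{U}$ such that the colour of every embedding $\str{A}\to\str{U}'$ depends only on its similarity type. To prove this I would iterate a forcing-free version of the Halpern--L\"auchli theorem, following the Laver--Milliken--Dobrinen line, but adapted to the coding tree where not every extension of a branch is admissible due to forbidden patterns. The usual device is to enlarge each ``bad'' candidate embedding to an \emph{envelope} --- a finite similarity type in which every admissible extension is realized --- and colour envelopes instead; a Milliken-style theorem on strong subtrees of $\mathbb T$ then delivers a monochromatic subtree inside which envelopes are controlled. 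Combining this with the finiteness of similarity types yields the bound $D(\str{A})$.

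The main obstacle I anticipate is precisely this third step: setting up envelopes and verifying a Halpern--L\"auchli type pigeonhole inside the restricted coding tree where forbidden irreducible structures must be avoided at every finite stage. All the rest --- the reduction to the Fra\"iss\'e limit, the coding, and the finiteness of similarity types --- is essentially bookkeeping once the binary, irreducible, finite hypotheses are used. The hypothesis that $L$ is binary is crucial here, since it keeps the coding tree branching over a finite alphabet and the passing-type data uniformly bounded; extending beyond binary would require tracking higher-arity types at each level and would break the envelope argument in its present form.
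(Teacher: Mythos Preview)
Your sketch is a faithful outline of Zucker's original argument in \cite{zucker2020}: enumerate the Fra\"iss\'e limit, build the coding tree of quantifier-free $1$-types, define similarity types/diaries of antichains, prove a Halpern--L\"auchli/Milliken style pigeonhole on the coding tree, and use envelopes to reduce arbitrary embeddings to controlled ones. As you correctly flag, the real work is the third step, and what you call ``bookkeeping'' is accurate only in hindsight; nonetheless the ingredients you list are the right ones, and nothing you wrote is wrong.

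However, you should be aware that the present paper does \emph{not} give its own direct proof of this theorem. Theorem~\ref{thm:zucker} is simply quoted from \cite{zucker2020}. What the paper does instead is offer an alternative route: it introduces weak types and $\mathcal K$-type-respecting embeddings, states a general result (Theorem~\ref{thm:main}) asserting finite big Ramsey degrees for type-respecting embeddings whenever the class has the type-respecting amalgamation property, and then in Proposition~\ref{prop1} shows two things for finite binary $L$ and finite irreducible $\mathcal F$: (i) type-respecting amalgamation holds (using Observation~\ref{obs:binary} that weak types collapse to ordinary $1$-types in the binary case), and (ii) Theorem~\ref{thm:main} implies Theorem~\ref{thm:zucker}, via an envelope argument that is explicitly borrowed from Section~4 of \cite{zucker2020} and the identification of $\mathcal K$-type-respecting embeddings with Zucker's aged embeddings.

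So your approach and the paper's are genuinely different in packaging. You work directly with the coding tree and prove the Ramsey theorem there; the paper abstracts the situation into a categorical amalgamation condition on type-respecting maps and deduces the binary case as a special instance. The payoff of the paper's abstraction is that it isolates exactly where the binary hypothesis is used (Observation~\ref{obs:binary}) and makes precise why higher arities cause trouble (Proposition following Proposition~\ref{prop1}), pointing toward a setting in which Problems~\ref{prob:infinite} and~\ref{prob:badclique} are no longer obstacles. Your direct approach, on the other hand, is self-contained and does not require the new machinery, but it does not obviously extend beyond the binary case --- as you yourself note in your last paragraph.
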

%Precise characterisation of the big Ramsey degrees was later given by Balko, Chodounsk{\' y}, Dobrinen, Hubi{\v c}ka, Kone{\v c}n{\' y}, Vena and Zucker~\cite{Balko2021exact}.
This result can be seen as an infinitary variant of well known Ne\v set\v ril--R\"odl theorem (one of the fundamental results of structural Ramsey theory) which in our setting can be stated as follows:
\begin{theorem}[Ne\v set\v ril--R\"odl theorem~\cite{Nevsetvril1977,Nevsetvril1989}]
	\label{thm:NR}
	Let $L$ be a relational language, $\mathcal F$ a set of finite irreducible $L$-structures.
	Then for every finite $\str{A}\in \Forb(\mathcal F)$ there
	exists a finite integer $d=d(\str{A})$ such that for every finite $\str{B}\in \Forb(\mathcal F)$ and finite $k>0$ there exists a finite 
	$\str{C}\in \Forb(\mathcal F)$ satisfying $\str{C}\longrightarrow (\str{B})^\str{A}_{k,d}$.
\end{theorem}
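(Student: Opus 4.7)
The plan is to apply the partite construction of Nešetřil and Rödl, and in fact to prove the stronger statement that one may take $d(\str{A})=1$. The proof has three layers: a partite base case, an iterative partite construction, and a separate verification that the construction stays in $\Forb(\mathcal{F})$.

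Step 1 (partite lemma). I would work in the auxiliary category of \emph{$\str{A}$-partite $L$-structures}: $L$-structures whose vertex set is partitioned as $\bigsqcup_{a\in A}X_a$ together with a canonical ``projection'' to $\str{A}$. A \emph{transversal copy} of $\str{A}$ is an embedding respecting the partition. The partite lemma asserts: for every finite $\str{A}$-partite $\str{B}$ and every $k$ there is a finite $\str{A}$-partite $\str{C}$ which is Ramsey for transversal copies of $\str{A}$. For a finite relational language of fixed arity this follows from a product-Ramsey argument based on the Hales--Jewett theorem; for general arities one proceeds by induction on the arity. The key point is that in the partite setting one has much more control than in the ambient category, because relations can be restricted to cross partition classes in a prescribed way.

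Step 2 (partite construction). Given finite $\str{A},\str{B}$ and $k$, I would choose an ``envelope'' $\str{P}_{0}$: a finite $\str{A}$-partite structure whose ``shadow'' (the $L$-structure obtained by forgetting the partition) contains sufficiently many copies of $\str{B}$. Enumerate all copies of $\str{A}$ appearing in the shadow of $\str{P}_{0}$ as $\alpha_{1},\ldots,\alpha_{N}$ and build $\str{P}_{0}\subseteq \str{P}_{1}\subseteq \cdots \subseteq \str{P}_{N}$ inductively: at stage $i$, apply the partite lemma at position $\alpha_{i}$ to produce $\str{P}_{i}$ which is Ramsey at $\alpha_{i}$ relative to $\str{P}_{i-1}$. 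A standard reverse-induction argument, unfolding the Ramsey properties from $\alpha_{N}$ back to $\alpha_{1}$, shows that the shadow of $\str{P}_{N}$ is the desired $\str{C}$.

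Step 3 (staying inside $\Forb(\mathcal{F})$). This is where the irreducibility hypothesis enters decisively. Every step of the partite construction is, up to relabelling, an amalgamation that is \emph{free} outside a single fibre: no relational tuple is added whose vertices lie in more than one copy of $\str{P}_{i-1}$ inside $\str{P}_{i}$. Consequently, any embedding $\str{F}\to \str{P}_{i}$ with $\str{F}\in\mathcal{F}$ must have its image entirely within one of these copies---otherwise two vertices of the image would lie in different factors, and by irreducibility some tuple of $\str{F}$ would contain both, contradicting freeness. Induction on the stage, together with the inductive hypothesis that the envelope $\str{P}_{0}$ is itself $\mathcal{F}$-free, then yields $\str{C}\in\Forb(\mathcal{F})$.

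The main obstacle is clearly Step 1. The partite lemma must be set up so that Hales--Jewett (or its iterated variant) can be invoked without introducing relations that force new transversal copies of $\str{A}$ beyond those intended, and so that the resulting partite structure has a well-defined projection to $\str{A}$. Handling the general (possibly infinite) relational $L$ requires the further observation that only finitely many symbols and tuples appear in $\str{A}$ and $\str{B}$, so the partite lemma can be applied in a finite reduct and the remaining symbols added freely; once these bookkeeping issues are sorted out, Steps 2 and 3 are essentially mechanical.
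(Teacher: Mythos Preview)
The paper does not supply its own proof of Theorem~\ref{thm:NR}: it is quoted as the classical Ne\v set\v ril--R\"odl theorem with references to~\cite{Nevsetvril1977,Nevsetvril1989} and used only as a point of comparison with Theorem~\ref{thm:zucker}. The one argument in the paper that touches this statement is the paragraph immediately following it, which observes that in the restricted setting of Theorem~\ref{thm:zucker} (finite binary $L$, finite $\mathcal{F}$) the conclusion of Theorem~\ref{thm:NR} follows from Theorem~\ref{thm:zucker} by universality of $\str{U}$ plus a compactness argument; this does not cover the full generality of Theorem~\ref{thm:NR} and is not presented as a proof of it.

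Your sketch is essentially the original partite-construction proof from the cited references, and the three-step outline (partite lemma via Hales--Jewett, iterated amalgamation indexed by the copies of $\str{A}$, and the irreducibility argument showing the construction stays in $\Forb(\mathcal{F})$) is the standard and correct route. One small correction: you assert that one may take $d(\str{A})=1$, but as the paper notes just after the theorem, in the unordered setting the value of $d(\str{A})$ is the number of non-isomorphic linear orderings of $\str{A}$. The partite construction yields $d=1$ only after a linear order has been adjoined to $L$; one then forgets the order to obtain the bound stated here.
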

To see the correspondence of Theorems~\ref{thm:zucker} and \ref{thm:NR} choose $\mathcal F$ as in Theorem~\ref{thm:zucker} 
and a finite $\mathcal F$-free $L$-structure $\str{A}$. By Theorem~\ref{thm:zucker} there is a finite
$D=D(\str{A})$ such that every $\Forb(\mathcal F)$-universal  $L$-structure $\str{U}$ satisfies $\str{U}\longrightarrow (\str{U})^\str{A}_{k,D}$ for every $k>0$.
By $\Forb(\mathcal F)$-universality of $\str{U}$ for every $\mathcal F$-free $L$-structure $\str{B}$ we have $\str{U}\longrightarrow (\str{B})^\str{A}_{k,D}$
and by compactness there exists a finite substructure $\str{C}$ of $\str{U}$ such that $\str{C}\longrightarrow (\str{B})^\str{A}_{k,D}$.
%We shall note that the characterisation of big Ramsey degrees differs from bounds given by Ne\v set\v ril--R\"odl theorem (for so-called
%\emph{small Ramsey degrees}), however the precise bounds can be obtained by further analysis of the construction.
In general, $D(\str{A})$ (characterised precisely in~\cite{Balko2021exact}) differs from $d(\str{A})$ which corresponds to the number of non-isomorphic orderings of vertices of $\str{A}$.
However, the proof of Theorem~\ref{thm:zucker} can be adjusted to recover precise bounds on $d(\str{A})$.
%However
%In the finite case $D\str{D) can be
%furher reduced to d(AA) by ordering property (d(AA) is the order of
%authomorphism group of AA).

Comparing  Theorems~\ref{thm:zucker} and~\ref{thm:NR}, it is natural to ask
 whether the assumptions about finiteness of $\mathcal F$, finiteness of the language $L$,  and relations
being only binary can be dropped from Theorem~\ref{thm:zucker}.  It is known that the first condition
can not be omitted:  Sauer~\cite{Sauer2002} has shown that there exist infinite families $\mathcal F$
of finite irreducible $L$-structures where $\Forb(\mathcal F)$-universal structures have infinite big Ramsey degrees of vertices.
This is true even for language $L$ containing only one binary relation (digraphs). The latter two conditions remain open.

Until recently, most bounds on big Ramsey degrees were for $L$-structures in binary languages only.
Techniques to give bounds on big Ramsey degrees of 3-uniform hypergraphs have been
announced in Eurocomb 2019~\cite{Hubickabigramsey} and published recently~\cite{Hubicka2020uniform}; they were later extended
to languages with arbitrary relational symbols~\cite{braunfeld2023big}. Extending links between the Hales--Jewett theorem~\cite{Hales1963}, Carlson--Simpson
theorem~\cite{carlson1984} and big Ramsey degrees established in~\cite{Hubicka2020CS}, a Ramsey-type theorem for trees with successor operations has been
introduced~\cite{Balko2023Sucessor} which extends to all known big Ramsey results on $L$-structures.
However, the following two problems remain open:
\begin{problem}[\cite{braunfeld2023big}]
	\label{prob:infinite}
	Does the random graph with countably many types of edges have finite big Ramsey degrees?
\end{problem}
While in the previous problem one can argue that infiniteness of the language makes it hard, there is a priori no reason to expect that the following class should cause troubles:
\begin{problem}
	\label{prob:badclique}
	Let $L=\{E,H\}$ be a language with one binary relation $E$ and one ternary relation $H$. Let $\str{F}$ be the
	$L$-structure where $F=\{0,1,2,3\}, R^F=\{(1,0),(1,2),(1,3)\}, H=\{(0,2,3)\}$. 
	Denote by $\mathcal K$ the class of all $L$-structures $\str{A}$ such that there is no monomorphism $\str{F}\to\str{A}$.
	Do $\mathcal K$-universal $L$-structures have finite big Ramsey degrees?
\end{problem}
These problems demonstrated unforeseen obstacles on giving a natural infinitary generalization of the Ne\v set\v ril--R\"odl theorem.
We give a new approach which avoids both these problems and which suggests perhaps the proper setting for big Ramsey degrees.
%Motivated by common features of existing proofs giving upper bound of big Ramsey degrees we give an alternative formulation which avoids
%these problems.

Finite structural Ramsey results are most often proved by refinements of the Ne\v set\v ril--R\"odl partite construction~\cite{Nevsetvril1989}.
This technique does not generalize to infinite structures due to essential use of backward induction.
Upper bounds on big Ramsey degrees are based on Ramsey-type theorems on trees (e.g. the Halper--L\"auchli theorem~\cite{Halpern1966}, Milliken's tree theorem~\cite{Milliken1979}, the Carlson--Simpson theorem~\cite{carlson1984}, and their various refinements~\cite{zucker2020,dobrinen2017universal,dobrinen2019ramsey}).
This proof structure may seem unexpected at first glance but is justified by the existence of unavoidable colourings (based on idea of Sierpi\'nski) which are constructed by assigning colors according to subtrees of the tree of 1-types (see e.g.~\cite{Balko2023,dobrinen2021ramsey}). The exact characterisations of big Ramsey degrees
can then be understood as an argument that this proof structure is in a very specific sense the only possible: the trees used to give upper bounds are also encoded in the precise characterisations of big Ramsey degrees.

We briefly review the construction of tree of 1-types.
Recall that a (model-theoretic) \emph{tree} is a partial order $(T,\leq)$ where the down-set of every $x\in T$ is a finite chain.
An \emph{enumerated $L$-structure} is simply an $L$-structure $\str{U}$ whose underlying set is the ordinal $|\str{U}|$. Fix  a countably infinite enumerated $L$-structure $\str{U}$.  Given vertices $u,v$ and an integer $n$ satisfying $\min(u,v)\geq n\geq 0$, we write $u\sim^\str{U}_n v$, and say that \emph{$u$ and $v$ are of the same (quantifier-free) type over $0,1,\ldots,n-1$}, if the $L$-structure induced by $\str{U}$ on $\{0,1,\ldots, n-1,u\}$ is identical to the $L$-structure induced by $\str{U}$ on $\{0,1,\ldots, n-1,v\}$ after renaming vertex $v$ to $u$. We write $[u]^\str{U}_n$ for the $\sim^\str{U}_n$-equivalence class of vertex $u$.
\begin{definition}[Tree of 1-types]
	Let $\str{U}$ be an infinite (relational) enumerated $L$-structure. Given $n< \omega$, write $\mathbb{T}_\str{U}(n) = \omega/\!\sim^{\str{U}}_n$. A (quantifier-free) \emph{1-type} is any member of the disjoint union $\mathbb{T}_\str{U}:=\bigsqcup_{n<\omega} \mathbb{T}_\str{U}(n)$. We turn $\mathbb{T}_\str{U}$ into a tree as follows. Given $x\in \mathbb{T}_\str{U}(m)$ and $y\in \mathbb{T}_\str{U}(n)$, we declare
	that $x\leq^{\mathbb T}_{\str{U}} y$ if and only if $m\leq n$ and $x\supseteq y$. 
\end{definition}
One can associate every vertex of $v\in \str{U}$ with its corresponding equivalence class in $\simeq^{\str{U}}_v$.
This way every substructure $\str{A}\subseteq \str{U}$ corresponds to a subset of nodes of the tree $\mathbb{T}_\str{U}$. 
Sierpi\'nski-like colourings can be then constructed by considering shapes of the meet closures of nodes corresponding to each given copy.

Every type in $x\in \omega/\sim^\str{U}_n$ can be described as an $L$-structure $\str{T}$ with vertex set $T=\{0,1,\ldots n-1,t\}$ such that for every $v\in x$ it holds that $L$-structure induced by $\str{U}$ on  $\{0,1,\ldots n-1,v\}$ is $\str{T}$ after renaming $t$ to $v$.
This is very useful in the setting where types originating from multiple enumerated $L$-structures are considered (which naturally happens in many big Ramsey proofs~\cite{Hubicka2020CS,Balko2023}).

The concept of the tree of 1-types was implicit in early proofs (such as in Devlin's thesis) and became explicit later.
The tree of 1-types itself is, however, not sufficient to give upper bounds on big Ramsey degrees for $L$-structures in languages containing
symbols of arity 3 and more.  Upper bounds in~\cite{Hubicka2020uniform} and~\cite{braunfeld2023big} are based on the product form of the
Milliken tree theorem which in turn suggests the following notion of a weak type.

For the rest of this note, fix a relational language $L$ containing a binary symbol $\leq$. For all $L$-structures, $\leq$ will always be a linear order on vertices which is either finite or of order-type $\omega$. This will describe the enumeration. All embeddings will be monotone.
\begin{definition}[Weak type]
	\label{def:wt}
	We denote by $L^f$ the language $L$ extended by unary function symbol $f$.
	An $L^f$-structure $\str{T}$ is a \emph{weak type} of \emph{level} $\ell$ if
	\begin{enumerate}
	\item $T=\{0,1,\ldots, \ell-1,t_0,t_1,\ldots\}$ where vertices $t_i$ are called \emph{type vertices}.
	\item\label{wt:2} For every $R\in L$ and $\vec{t}\in \rel{T}{}$ it holds that $\vec{t}\cap \{t_0,t_1,\ldots\}$ is a
		(possibly empty) initial segment of type vertices (i.e. set of the form $\{t_i : i\in k\}$ for some $k\in \omega$)
			and $\vec{t}\cap \{0,1,\ldots,\ell-1\}\neq \emptyset$.
	\item For every $i>0$ we put $\func{T}{}(t_i)=t_{i-1}$. We put $\func{T}{}(t_0)=t_0$ and $\func{T}{}$ is undefined otherwise.
	\end{enumerate}
\end{definition}
Weak types thus give less information than standard model-theoretic $k$-types~\cite{Hodges1993}.
Function $f$ is added to type vertices to distinguish them from normal vertices.
This will be useful in later constructions.
Notice that while technically weak type has infinitely many types vertices, thanks to condition~\ref{wt:2} of Definition~\ref{def:wt},
if the language $L$ contains no relations of arity $r+1$ or more, vertices $t_{r-1},t_{r},\ldots$ will be isolated.  In particular:
\begin{observation}
	\label{obs:binary}
If $L$ contains only unary and binary symbols then there is one-to-one correspondence between 1-types and weak types
because only type vertex $t_0$ carries interesting structure.
\end{observation}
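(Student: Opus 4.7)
The plan is to unpack condition (\ref{wt:2}) of Definition~\ref{def:wt} to pin down exactly which type vertices can carry nontrivial $L$-structure when $L$ contains only unary and binary relation symbols, and then to read off the bijection with 1-types. First I would argue that for every relation $R\in L$ and every tuple $\vec{t}\in \rel{T}{}$ of a weak type $\str{T}$, no type vertex $t_i$ with $i\geq 1$ can appear in $\vec{t}$. Indeed, if $R$ is unary then condition (\ref{wt:2}) forces $\vec{t}\subseteq\{0,1,\ldots,\ell-1\}$, so no type vertex appears at all. If $R$ is binary then $\vec{t}$ has at most two coordinates, at least one of which is named; hence the initial segment $\vec{t}\cap\{t_0,t_1,\ldots\}$ has size at most one and can only equal $\emptyset$ or $\{t_0\}$. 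Consequently $t_1,t_2,\ldots$ are $L$-isolated, and their only role in $\str{T}$ is as the domain of the fixed $f$-chain prescribed by clause (3) of Definition~\ref{def:wt}.

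With this in hand I would define a map $\Phi$ sending a weak type $\str{T}$ of level $\ell$ to the $L$-structure on $\{0,1,\ldots,\ell-1,t\}$ obtained by taking the $L$-reduct of the substructure of $\str{T}$ on $\{0,1,\ldots,\ell-1,t_0\}$ and renaming $t_0$ to $t$; this is precisely the encoding of a 1-type at level $\ell$ described in the paragraph preceding the observation. An inverse is built by starting from a 1-type on $\{0,1,\ldots,\ell-1,t\}$, renaming $t$ to $t_0$, adjoining isolated vertices $t_1,t_2,\ldots$ carrying no new relations, and extending by the canonical function $\func{}{}(t_0)=t_0$ and $\func{}{}(t_i)=t_{i-1}$ for $i\geq 1$. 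Checking that the resulting $L^f$-structure satisfies all three clauses of Definition~\ref{def:wt} is immediate from the previous paragraph, and the two maps are then manifestly mutually inverse.

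There is essentially no substantial obstacle: the observation is a direct corollary of bounding the relation arity in condition (\ref{wt:2}). The one bookkeeping point to watch is that a single $t_0$ really does qualify as an initial segment of the listing $t_0,t_1,\ldots$, so tuples of the form (named, $t_0$) or ($t_0$, named) remain legal in binary relations and account for all nontrivial interaction between type vertices and named vertices.
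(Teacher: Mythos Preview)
Your proposal is correct and follows exactly the reasoning the paper intends: the observation is stated without a separate proof, the justification being the clause ``because only type vertex $t_0$ carries interesting structure,'' which is precisely what your first paragraph establishes from condition~(\ref{wt:2}).

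One minor wrinkle worth tightening: your first paragraph shows that for a unary $R$, condition~(\ref{wt:2}) forbids any tuple containing a type vertex, so in particular $(t_0)\notin\rel{T}{}$. But a 1-type on $\{0,\ldots,\ell-1,t\}$ may record $R(t)$ for unary $R$, and your inverse map would then output an $L^f$-structure with $(t_0)\in R$, violating clause~(\ref{wt:2}); so the claim that the result ``satisfies all three clauses \ldots\ is immediate'' is not quite right. This is really an imprecision in the observation as stated rather than in your method; the easy fix is to note that the bijection is between weak types and 1-types modulo unary data on $t$ (or to restrict to binary symbols), which is all that is needed for the application in Proposition~\ref{prop1}.
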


1-types describes one vertex extensions of an initial part of the enumerated $L$-structure. The weak-type equivalent of this is the following:

\begin{definition}[Weak type of a tuple]
	Let $\str{A}$ be an enumerated $L$-structure, $\str{T}$
	a weak type of level $\ell\in A\subseteq \omega$ and $\vec{a}=(a_0,a_1,\ldots, a_{k-1})$ an increasing tuple  of vertices from $A\setminus \ell$. We say that
	\emph{$\vec{a}$ has type $\str{T}$} on level $\ell$ if the function $h\colon T\to A$
	given by:
	$$
	h(x)=\begin{cases}
		x & \hbox{if $x\in \ell$},\\
		a_i & \hbox{if $x= t_i$ for some $i<k$}
	\end{cases}
	$$
	has the property that for every $R\in L$ and $\vec{b}$ a tuple of vertices in $\{0,1,\ldots,\ell-1,t_0,t_1,\ldots, t_{k-1}\}$ such that $\vec{b}\cap \{t_0,t_1,\ldots\}$ is an initial segment
	of type vertices and $\vec{b}\cap \{0,1,\ldots,\ell-1\}\neq \emptyset$ it holds that  $\vec{b}\in \rel{T}{}\iff h(\vec{b})\in \rel{A}{}$.
\end{definition}
\begin{definition}[Tree of weak types]
	Given an enumerated $L$-structure $\str{U}$, its \emph{tree of weak types} consists of all $L^f$-structures $\str{T}$ that are weak types of some tuple of $\str{U}$ on some level $\ell\in U$ ordered by $\subseteq$.
\end{definition}
Given an enumerated $L$-structure $\str{A}$ and a weak type $\str{T}$, we say that $\str{T}$ \emph{extends} $\str{A}$ if $\str{T}\setminus\{t_0,t_1,\ldots\}=\str{A}$.
Given two types $\str{T}$ and $\str{T}'$ that extend $\str{A}$, and $n\geq 0$, we say that $\str{T}$ and $\str{T}'$ agree as $n$-types if $\str{T}\restriction (A\cup\{t_0,t_1,\ldots t_{n-1}\})=\str{T}'\restriction (A\cup\{t_0,t_1,\ldots t_{n-1}\})$.

A standard technique for proving infinite Ramsey-type theorems is to work with finite approximations of the embeddings considered.  See e.g. Todorcevic's axiomatization of Ramsey spaces~\cite{todorcevic2010introduction}.
Initial approximations of our embeddings will be described as follows:
\begin{definition}[Structure with types]
	Given a finite enumerated $L$-structure $\str{A}$, we denote by $\str{A}^+$ the $L$-structure created from the disjoint union of all weak types extending $\str{A}$ by
	\begin{enumerate}
		\item identifying all copies of $\str{A}$, and,
		\item identifying the copy of vertex $t_i$ of weak type $\str{T}$ and with the copy of $t_i$ of weak type $\str{T}'$ whenever $\str{T}$ and $\str{T}'$ agree as $i+1$ types.
	\end{enumerate}
\end{definition}
Observe that thanks to the function $f$ added to weak types, for any two $L$-structures with types $\str{A}^+$ and $\str{B}^+$, every embedding $h\colon\str{A}^+\to\str{B}^+$ is
also a map from weak types of $\str{A}$ on level $\lvert A\rvert$ to weak types of $\str{B}$ of level $\lvert B\rvert$.

Given an $L$-structure $\str{A}$ and a vertex $v$, we denote by $\str{A}({<}v)$ the $L$-structure induced by $\str{A}$ on $\{a\in A; a<v\}$ and call it the \emph{initial segment} of $\str{A}$.
The key notion for our approach is to restrict attention to embedding which behave well with respect to weak types. That is, for every initial segment of the $L$-structure, the rest of
the embedding can be summarized via embedding of weak types  extending the initial segments.
\begin{definition}[Type-respecting embeddings of  $L$-structures]
%\marginpar{MK: Musí být nekonečné? Potom bychom u definice typové ramseyovské šipky měli chtít, aby $\str B$ bylo nekonečné.}
		Given  enumerated $L$-structures $\str{A}$ and $\str{B}$ and an embedding $h\colon\str{A}\to \str{B}$, we say that $h$ is \emph{type-respecting}
		if for every $v\in A$ there exists an embedding $h^v\colon\str{A}({<}v)^+\to \str{B}({<}h(v))^+$ such that the weak types of tuples in $\str{B}$
		on level $h(v)$ consisting only of vertices of $h[A]$ are all in the image $h^v[\str{A}]$.
\end{definition}
\begin{definition}[$\mathcal K$-type-respecting embeddings of initial segments]
Let $\str{A}$ and $\str{B}$ be two finite enumerated $L$-structures.
Embedding $h\colon\str{A}^+\to \str{B}^+$ is \emph{type-respecting} if for every (possibly infinite) $L$-structure $\str{A}'$ with initial segment $\str{A}$ there exists an $L$-structure $\str{B}'$ with initial segment $\str{B}$ and a type-respecting embedding $g\colon \str{A}\to\str{B}$ 
	finitely approximated by $h$. That is $g\restriction A=h\restriction A$ and every weak type in $\str{B}'$ of a tuple consisting of vertices of $g[A]$ of level $g(\mathop{\mathrm{max}} A)$ is in $h[A^+]$.

	Given class $\mathcal K$ of $L$-structures we say that  $h\colon\str{A}^+\to \str{B}'^+$ is \emph{$\mathcal K$-type-respecting} if for every $L$-structure $\str{A}'\in \mathcal K$ with initial segment $\str{A}$ there exists an structure $\str{B}'\in \mathcal K$ with initial segment $\str{B}$ and a type-respecting embedding $g\colon\str{A}\to\str{B}$ finitely approximated by $h$.
	%\marginpar{MK: Odebral jsem podmínky, že $\str A'$ a $\str B'$ mají být nekonečné, myslím, že to je ekvivalentní i bez toho, ne?}
\end{definition}
	\begin{definition}[Type-respecting amalgamation property]
		\label{def:typeamalg}
		Let $\mathcal K$ be a hereditary class of enumerated $L$-structures. We say that $\mathcal K$ has \emph{type-respecting amalgamation property} if 
		given three finite enumerated $L$-structures $\str{A}$, $\str{B}$, $\str{B}'\in \mathcal K$ such that $B'\setminus B=\{\mathop{\mathrm{max}} B'\}$ and $\str{B}'\restriction \str{B}=\str{B}$,
		two $\mathcal K$-type-respecting embeddings $f\colon\str{A}^+\to \str{B}^+$, $f'\colon\str{A}^+\to \str{B}'^+$ and a type-respecting (but not necessarily $\mathcal K$-type-respecting) embedding $g\colon\str{B}^+\to \str{B}'^+$ such that $g\restriction B$ is the identity and $g\circ f=f'$,
		there exists a $\mathcal K$-type-respecting embedding $g'\colon\str{B}^+\to \str{B}'^+$ such that
		$g'\circ f=f'$ and $g'\restriction B=\mathrm{Id}$.
	\end{definition}

Given a class of $L$-structures $\mathcal K$, finite $\str{A}\in \mathcal K$ and $\str{B}\in \mathcal K$, we denote by ${\str{B}\choose \str{A}}^{\mathcal K}$ the set
of all $\mathcal K$-type-respecting embeddings $\str{A}^+\to \str{B}'^+$ for $\str B'$ an initial segment of $\str B$. We write $\str{C}\longrightarrow^{\mathcal K} (\str{B})^\str{A}_{k,l}$ to denote the following statement:
for every colouring $\chi$ of ${\str{C}\choose\str{A}}^{\mathcal K}$ with $k$ colours, there exists
a type-respecting embedding $f\colon\str{B}\to\str{C}$ such that $\chi$ does not take more than $l$ values on ${f(\str{B})\choose \str{A}}^\mathcal K$.
For a countably infinite $L$-structure $\str{B}$ and its finite suborder $\str{A}$, the \emph{big Ramsey degree of $\str{A}$ in $\mathcal K$-type-respecting embeddings} of $\str{A}$ in $\str{B}$ is
the least number $D\in \mathbb N\cup \{\infty\}$ such that $\str{B}\longrightarrow^{\mathcal K} (\str{B})^\str{A}_{k,D}$ for every $k\in \mathbb N$.

For type-respecting embeddings we can prove the Ramsey property in the full generality (showing that, in this situation, Problem~\ref{prob:badclique} is not a problem).

	\begin{theorem}
		\label{thm:main}
		Let $L$ be a finite relational language.
		Let $\mathcal F$ be a finite family of finite irreducible enumerated $L$-structures. Denote by $\mathcal K_\mathcal F$ the class of all finite or countably-infinite enumerated $L$-structures $\str{A}$ where $\leq_\str{A}$ is either finite or of order-type $\omega$  such that for every $\str{F}\in \mathcal F$ there no embedding $\str{F}\to \str{A}$. Assume that $\mathcal K_\mathcal F$ has the type-respecting amalgamation property. 
		Then for every universal $L$-structure $\str{U}\in \mathcal K_\mathcal F$ and every finite $\str{A}\in \mathcal K_\mathcal F$ there is a finite $D=D(\str{A})$ such that $\str{U}\longrightarrow^\mathcal{K} (\str{U})^{\str{A}}_{k,D}$ for every $k\in \mathbb N$.
	\end{theorem}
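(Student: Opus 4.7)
The plan is to reduce the statement to a tree Ramsey theorem on a suitable coding tree and then use the type-respecting amalgamation property to transfer the result back to $\str{U}$. Fix an enumeration of $\str{U}$ in order type $\omega$ and consider its tree of weak types $\mathbb{T}^w_\str{U}$. Because $L$ is finite relational with some maximal arity $r$, only the type vertices $t_0,\ldots,t_{r-2}$ can carry non-trivial structure in any weak type, so there are only finitely many weak types over any fixed initial segment and $\mathbb{T}^w_\str{U}$ is finitely branching.

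A $\mathcal K$-type-respecting embedding $h\colon\str{A}^+\to\str{U}^+$ is, thanks to the unary function symbol $f$ of $L^f$, completely determined by an order-preserving embedding $\str{A}\to\str{U}$ together with, for each vertex $v\in A$, a compatible choice of weak type over the image of $\str{A}({<}v)$. This data marks a finite configuration of labelled nodes in $\mathbb{T}^w_\str{U}$. Two embeddings have the same \emph{shape} when their configurations are isomorphic as labelled trees; since $\str{A}$, $L$ and the collections of weak types at each level are all finite, only finitely many shapes occur, and the number of shapes is what will provide the bound $D(\str{A})$.

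Next, I would apply the product form of the Milliken tree theorem for structured trees, as developed in \cite{Hubicka2020uniform,braunfeld2023big} and refined in \cite{Balko2023Sucessor}. Given a $k$-colouring of ${\str{U}\choose\str{A}}^{\mathcal K}$, I enumerate the finitely many shapes and iteratively extract strong substructures of $\mathbb{T}^w_\str{U}$ on which each shape is monochromatic, producing a strong substructure $\mathbb{S}\subseteq\mathbb{T}^w_\str{U}$ that witnesses the partition bound with exactly one colour per shape.

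The final step is to realise $\mathbb{S}$ as the image of a type-respecting self-embedding $g\colon\str{U}\to\str{U}$. I would build $g$ by induction on vertices, maintaining at each stage $n$ a $\mathcal K$-type-respecting embedding of $(\str{U}\restriction n)^+$ into the appropriate initial segment of $\str{U}$ whose image lies inside $\mathbb{S}$. To extend to stage $n+1$, the substructure $\mathbb{S}$ dictates which weak type extension $g(n)$ should satisfy; this is a priori only type-respecting, and the type-respecting amalgamation property of Definition~\ref{def:typeamalg} is invoked to upgrade it to a $\mathcal K$-type-respecting one, with irreducibility of $\mathcal F$ ensuring that the resulting gluing introduces no forbidden substructure. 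The main difficulty I anticipate is aligning the Milliken step with the amalgamation step: the strong substructure $\mathbb{S}$ obtained from tree Ramsey need not a priori be compatible with the amalgamation upgrades demanded at every inductive stage, and one will likely have to pass to a further strong substructure of $\mathbb{S}$ chosen in advance to anticipate all such upgrades coherently — a bookkeeping task analogous to the envelope arguments of \cite{zucker2020,dobrinen2021ramsey}.
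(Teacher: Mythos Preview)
The paper does not actually contain a proof of Theorem~\ref{thm:main}. This is an extended-abstract style announcement: the theorem is stated, and the paper then proves only the auxiliary Propositions (that binary $\mathcal K_\mathcal F$ has type-respecting amalgamation, and that the class from Problem~\ref{prob:badclique} does not). So there is nothing to compare your proposal against at the level of the paper's own argument.

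That said, your outline is consistent with the toolkit the paper explicitly points to: the tree of weak types, the product Milliken/Carlson--Simpson machinery of~\cite{Hubicka2020uniform,braunfeld2023big,Balko2023Sucessor}, and an envelope-style recursion as in~\cite{zucker2020}. Your identification of the main difficulty --- that the strong subtree produced by the Ramsey step need not be closed under the $\mathcal K$-type-respecting upgrades required at each inductive stage --- is exactly the right place to be worried, and it is not addressed by anything in the present paper. Two further points you should not gloss over: first, the tree of weak types is not literally a tree of finite sequences in the sense Milliken's theorem expects, since a weak type carries several type vertices $t_0,\ldots,t_{r-2}$ and the relevant ``successor'' structure is more intricate; making the product-Milliken step precise here is real work (this is what~\cite{Balko2023Sucessor} is for). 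Second, your count of shapes bounds the number of colours only once you know that every shape is actually realised inside the final image $g[\str U]$; otherwise you get an upper bound but not the claimed $D(\str A)$, which is all the theorem asserts, so this is fine --- just be explicit that you are only proving finiteness.
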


We show the following:
\begin{prop}
	\label{prop1}
Let $L$ be a finite language consisting of binary and unary relational symbols only.  Let $\mathcal F$ be a finite family of enumerated irreducible $L$-structures.
Then $\mathcal K_\mathcal F$ has the type-respecting amalgamation property.
	Moreover, Theorem~\ref{thm:main} implies Theorem~\ref{thm:zucker}.
\end{prop}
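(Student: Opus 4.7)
My approach exploits Observation \ref{obs:binary}: in a language with only unary and binary symbols, weak types coincide with ordinary 1-types, so $\str{A}^+$ is just $\str{A}$ together with one vertex for each realisable 1-type over $\str{A}$ (with $t_1,t_2,\ldots$ isolated), and a $\mathcal K$-type-respecting embedding $\str{A}^+\to\str{B}^+$ unpacks as an embedding $\str{A}\to\str{B}$ plus, for every 1-type $\sigma$ over $\str{A}$ realisable in $\mathcal K_\mathcal F$, a compatible 1-type $\tau\supseteq\sigma$ over $\str{B}$ that is itself realisable in $\mathcal K_\mathcal F$. With this identification, both halves of the proposition reduce to standard strong-amalgamation facts about hereditary classes defined by finite irreducible forbidden substructures in a binary language.

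For the type-respecting amalgamation, set $v=\max B'$. The conditions $g'\circ f=f'$ and $g'\restriction B=\mathrm{Id}$ force $g'$ on $B\cup f[\str{A}^+]$, so only the type vertices of $\str{B}^+$ outside $f[\str{A}^+]$ remain free. For each such 1-type $\tau$ over $\str{B}$, I declare $g'(\tau)$ to be the \emph{free} extension of $\tau$ to $\str{B}'$: the unary predicates on the hypothetical realiser are already fixed by $\tau$, and I add no binary edge between the realiser and $v$. Since $\mathcal F$ consists of irreducible binary structures, any forbidden copy involving both the realiser and $v$ would need a binary edge between them, which is excluded, so this extension lies in $\mathcal K_\mathcal F$. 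To check $g'$ is actually $\mathcal K$-type-respecting, given any $\str{D}\in\mathcal K_\mathcal F$ extending $\str{B}$ in a way encoded by $\str{B}^+$, free-amalgamate $\str{D}$ and $\str{B}'$ over $\str{B}$; the usual strong amalgamation argument for irreducibly-defined hereditary classes in binary languages keeps the amalgam in $\mathcal K_\mathcal F$ and supplies the required witness $\str{E}$. On $f[\str{A}^+]$ the forced value of $g'$ is $\mathcal K$-type-respecting because $f'$ is.

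For the reduction to Theorem \ref{thm:zucker}, fix a $\mathcal K_\mathcal F$-universal $\str{U}$ and finite $\str{A}\in\mathcal K_\mathcal F$. Part 1 activates Theorem \ref{thm:main}, yielding $D=D(\str{A})$ with $\str{U}\longrightarrow^\mathcal{K}(\str{U})^{\str{A}}_{k,D}$ for every $k$. Given an ordinary $k$-colouring $\chi\colon{\str{U}\choose\str{A}}\to k$, define $\bar\chi(\bar h):=\chi(\bar h\restriction A)$ on ${\str{U}\choose\str{A}}^\mathcal{K}$, and apply Theorem \ref{thm:main} to obtain a type-respecting $F\colon\str{U}\to\str{U}$ on which $\bar\chi$ assumes at most $D$ values. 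Because $F$ is monotone, $F[\str{U}]$ is (isomorphic as an enumerated structure to) $\str{U}$ and therefore still $\mathcal K_\mathcal F$-universal. Every ordinary $h\in{F[\str{U}]\choose\str{A}}$ then lifts to a $\mathcal K$-type-respecting $\bar h$: for each realisable 1-type over $\str{A}$, a compatible realisable 1-type over the initial segment of $F[\str{U}]$ up to $\max h[A]$ is obtained by free amalgamation, and is in $\mathcal K_\mathcal F$ by the same irreducibility argument. Since $\chi(h)=\bar\chi(\bar h)$, $\chi$ takes at most $D$ values on ${F[\str{U}]\choose\str{A}}$, which is the conclusion of Theorem \ref{thm:zucker}.

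The main obstacle I anticipate is the formal book-keeping needed to verify that the free extension $g'$ and the free-amalgamation liftings truly satisfy the finite-approximation clause "every weak type of a tuple in $g''[D]$ on level $|B'|$ lies in $g'[B^+]$" once the quotient identifications built into the functor $(\cdot)^+$ (gluing copies of $\str{A}$ and of the type vertices $t_i$) are unwound; the conceptual content reduces to the standard observation that strong amalgamation suffices for hereditary classes defined by irreducible binary forbidden substructures.
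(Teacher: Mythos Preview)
Your definition of $g'$ matches the paper's (forced on $f[A^+]$, free extension elsewhere), but the verification that $g'$ is $\mathcal K_{\mathcal F}$-type-respecting has a gap. Given $\str D\in\mathcal K_{\mathcal F}$ with initial segment $\str B$, you propose the free amalgam of $\str D$ and $\str B'$ over $\str B$ as the witness $\str E$. This does not finitely approximate $g'$: if some $u\in D\setminus B$ has 1-type $\tau\in f[A^+]$ over $\str B$, then in the free amalgam $u$ carries the free extension of $\tau$ over $\str B'$, whereas $g'(\tau)=g(\tau)$ need not be free. Since $g'$ is an embedding, the free extension of such a $\tau$ is then not in $g'[B^+]$ at all, and the approximation clause fails. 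Your closing remark that ``on $f[\str A^+]$ the forced value is $\mathcal K$-type-respecting because $f'$ is'' does not manufacture a single witness for a $\str D$ that mixes both kinds of types; the two halves must be combined.

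The paper builds the correct mixed amalgam $\str A''$ explicitly: insert $v=\max B'$ into $\str D$ and relate $v$ to each $u\in D\setminus B$ according to $g'$ applied to the type of $u$ over $\str B$. To show $\str A''\in\mathcal K_{\mathcal F}$, a hypothetical copy of some $\str F\in\mathcal F$ must use $v$ (else it sits in $\str D$) and some $u\notin B'$ (else it sits in $\str B'$); irreducibility then forces a relation between $v$ and every such $u$, so each such $u$ has type in $f[A^+]$, and one derives a contradiction from $f'$ being $\mathcal K_{\mathcal F}$-type-respecting. This interaction between the free and forced parts is precisely the step your sketch omits (and is the ``obstacle'' you anticipated).

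For the ``moreover'' clause your pullback argument is a genuinely different route from the paper, which instead invokes the envelope construction and the identification of $\mathcal K$-type-respecting embeddings with Zucker's aged embeddings from Section~4 of~\cite{zucker2020}. Your approach is more self-contained, but you still owe the verification that every ordinary embedding into $F[\str U]$ lifts to a $\mathcal K$-type-respecting one, and the passage from enumerated structures back to the unordered language of Theorem~\ref{thm:zucker}.
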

\begin{proof}
	Fix $L$, $\mathcal F$ and $\mathcal K_\mathcal F$.  Let $\str{A}$, $\str{B}$, $\str{B}'\in \mathcal K_\mathcal F$,
	$f\colon\str{A}^+\to \str{B}^+$, $f'\colon\str{A}^+\to \str{B}'^+$ and  $g\colon\str{B}^+\to \str{B}'^+$ be as in Definition~\ref{def:typeamalg}.
	By Observation~\ref{obs:binary}, in order to specify $g'$, it is only necessary to give, for every weak type $\str{T}$ extending $\str{A}$, an image of its type vertex $t_0\in T$.
	Let $t'\in B^+$ be a vertex corresponding to $t_0$.  We consider two cases. (1) If $t'\in f[A^+]$ then we put $g'(t')=g(t')$. (2) If $t'\notin f[A^+]$ we put $g'(t')=t''$ where
	$t''$ is the only possible image of $t'$ such that there is no relational symbol $R\in L$ such that $\nbrel{\str{B}^+}{}$ contains a tuple with both $t''$ and $\mathop{\mathrm{max}} B'$.

	We verify that $g'$ is $\mathcal K_\mathcal F$-type-respecting. Choose $\str{A}'\in \mathcal K_\mathcal F$ with initial segment $\str{B}$.
	Construct $\str{A}''$ from $\str{A}$ by inserting new vertex $v$ after $\mathop{\mathrm{max}} B$ and extending $\leq_{\str{A}''}$ accordingly. Then add all tuples to relations necessary to make $\str{B}'$ the initial segment of $\str{A}''$.
	Finally, for every $\rel{}{}\in L$ and $u\in A'$ with $u>v$, put $(u,v)\in \nbrel{\str{A}''}{}$ if and only if $(g'(t),u)\in \nbrel{\str{B}^+}{}$ where is $t$ is the type vertex of $\str{B}$ corresponding to the
	type of $u$ in $\str{A}'$. Add tuples $(v,u)\in \nbrel{\str{A}''}{}$ analogously.

	It remains to verify that $\str{A}''\in \mathcal K_\mathcal F$. Assume to the contrary that there is $\str{F}\in \mathcal F$ and embedding $e\colon\str{F}\to\str{A}''$. Because $\str{A}'\in \mathcal K_\mathcal F$, clearly $v\in e[F]$.
	Because $\str{B}'\in \mathcal K_\mathcal F$ we also know that $e[F]$ contains vertices of $\str{A}''\setminus str{B}'$. Since $\str{F}$ is irreducible, all such vertices must have types created by condition (1) above.
	This is a contradiction with $f'$ being $\mathcal K_\mathcal F$-type-respecting.

	To see the moreover part we have to construct a universal $\str{U}$
	which is a substructure of some $\str{U}'\in \mathcal K_\mathcal F$
	with the property that for every $n\in \mathbb N$ there exists $N\in \mathbb N$ such that for every $\str{A}\in \mathcal K_\mathcal
	F$ with $n$ vertices and every embedding $e\colon\str{A}\to \str{U}'$ there exist a structure $\str{E}\in \mathcal K_\mathcal F$ 
	(called an \emph{envelope}) with at most $N$ vertices and a $\mathcal K_\mathcal F$-type-respecting embedding
	$h\colon\str{E}\to\str{U}$ such that $e[A]\subseteq h[E]$.  This follows from Section~4 of~\cite{zucker2020}, because $\mathcal K$-type-respecting embeddings in this setup are precisely the aged embeddings used by Zucker in~\cite{zucker2020}.
\end{proof}
\begin{prop}
	Let $L'=\{E,H,\leq\}$ and let $\str{F}'$ be an $L'$-structure created by expanding the $L$-structure $\str{F}$ from Problem~\ref{prob:badclique} by the natural
	order of vertices. Denote by $\mathcal K$ the class of all enumerated $L'$-structures $\str{A}$ for which there is no monomorphism $\str{F}\to\str{A}$.
	The class $\mathcal K$ has no type-respecting amalgamation property.
\end{prop}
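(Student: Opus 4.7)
The plan is to exhibit an explicit failure of type-respecting amalgamation in $\mathcal K$ by finding finite $\str{A},\str{B},\str{B}'\in \mathcal K$ and embeddings $f,f',g$ satisfying the hypotheses of Definition~\ref{def:typeamalg} for which no $\mathcal K$-type-respecting $g'$ exists. The underlying idea is to exploit the fact, mentioned in Observation~\ref{obs:binary}, that the ternary symbol $H$ makes weak types carry tuples of the form $H(v,t_0,t_1)$ that couple distinct type vertices with base vertices. In the binary case the proof of Proposition~\ref{prop1} defines $g'(t')$ for $t'\notin f[A^+]$ as a type vertex ``invisible'' to $\mathop{\mathrm{max}} B'$, but the commuting requirement $g'\circ f=f'$ can force $g'$ on $f[A^+]$ to carry an $H$-tuple involving $\mathop{\mathrm{max}} B'$, and this forced tuple combines with relations inherited from further extensions to produce a monomorphism of $\str F$.

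First I would fix minimal witnesses living on the boundary of $\mathcal K$: take $\str{A}=\{0,1\}$ with $E(1,0)$, $\str{B}=\str{A}\cup\{2\}$ with the extra edge $E(1,2)$, and $\str{B}'=\str{B}\cup\{3\}$ with the extra tuple $H(0,2,3)$, so that $\mathop{\mathrm{max}} B'=3$. Each of these is in $\mathcal K$, because no vertex of any of them has three outgoing $E$-arrows, which is the irreducible obstruction to a monomorphism from $\str F$ (note that since $\mathcal K$ is defined by forbidden monomorphisms rather than embeddings, the four vertices of $\str F$ may be placed in \emph{any} $\leq$-order in the target, and this flexibility will be crucial later).

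Next I would choose a distinguished weak type $\str{T}$ of $\str{A}$ (for instance one with a single type vertex $t_0$ and relation $E(1,t_0)$, possibly together with an $H$-tuple linking $t_0$ and $t_1$ to vertex $0$) and define $f$, $f'$ so that $f'(\str{T})$ extends $f(\str{T})$ by an $H$-tuple on $\{0,3,t_0\}$ or $\{0,3,t_1\}$ linking the type vertex to vertex $3$. One verifies both $f$ and $f'$ remain $\mathcal K$-type-respecting by showing that each realization of $f(\str{T})$ in $\str{B}^+$ and of $f'(\str{T})$ in $\str{B}'^+$ gives no $\str F$-monomorphism in isolation (the hub vertex $1$ still has only two outgoing $E$-arrows). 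Let $g$ be any type-respecting extension with $g\circ f=f'$ and $g\!\restriction_B=\mathrm{Id}$; the values of any candidate $g'$ on $f[A^+]$ are then rigidly forced to agree with $f'$.

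To rule out any $\mathcal K$-type-respecting $g'$, I would construct a specific $\str{B}^{\mathrm{new}}\in\mathcal K$ that extends $\str{B}$ by a vertex $v_0$ realizing the preimage weak type of $\str{T}$ under $f$ and by a further vertex $v_1$ whose weak type over $\str{B}\cup\{v_0\}$ is chosen so that, once $g'$ forces the $H$-tuple connecting $w_0$ to $\mathop{\mathrm{max}} B'=3$, the inherited relations on $w_1$ combine with the ambient $H(0,2,3)$ in $\str{B}'$ to supply both a third outgoing $E$-arrow from vertex $1$ and an $H$-tuple on its targets under some relabeling of the four vertices of $\str F$. The freedom available to $g'$ on the $v^*$-relations of $w_1$ is then enumerated (no relation, each direction of $E$, and each admissible ordering of an $H$-tuple with $\{3,w_1\}$), and for each possibility a specific map $\str F\to \str{B}'^{\mathrm{new}}$ is exhibited.

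The main obstacle will be the case analysis for $w_1$: since the monomorphism from $\str F$ need not respect $\leq$, many relabelings must be considered, and one must check that each candidate $g'$-choice matches at least one relabeling. I expect the argument to turn on showing that the $H(0,2,3)$ tuple already present in $\str{B}'$, together with the $H$-tuple forced by $f'$ on the image of $t_0$, leaves no simultaneously admissible configuration of $w_1$'s relations with $3$: each admissible configuration provides a third outgoing $E$-arrow from vertex $1$ in some relabeling and simultaneously realizes the required $H$-tuple on the $E$-targets.
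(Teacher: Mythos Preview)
Your high-level idea---force an $H$-tuple that couples $t_0$ and $t_1$ so that the image of a weak type under any candidate $g'$ is partly pinned---is the same as the paper's. But your execution diverges substantially from the paper's and, as written, is not a proof.

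The paper's witnesses are far smaller: $\str{A}$ is the \emph{empty} structure, $\str{B}=\{0\}$ with no relations, and $\str{B}'=\{0,1\}$ with $E(0,1)$. The crucial mechanism, which you do not invoke, is the identification clause in the definition of $\str{B}^+$: two weak types of $\str{B}$ that agree as $1$-types literally share their $t_0$ vertex in $\str{B}^+$. The paper takes $\str{T}_\str{B}$ (all relations empty) and $\str{T}'_\str{B}$ (only $H(0,t_0,t_1)$); these agree as $1$-types, so they share $t_0$. Since $g'\circ f=f'$ forces $g'(\str{T}_\str{B})=\str{T}_{\str{B}'}$, which has $E(1,t_0)$, the shared $t_0$ forces the $1$-type of $g'(\str{T}'_\str{B})$ to contain $E(1,t_0)$ as well---with no case analysis at all. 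The single test structure $\str{A}'=\{0,1,2\}$ with $H(0,1,2)$ (extending $\str{B}=\{0\}$) then produces a monomorphic copy of $\str{F}$ in any putative $\str{B}''$, since the inserted vertex $v$ acquires $E(0,v)$ from $\str{B}'$ and $E(v,1),E(v,2)$ from the forced $1$-type, while $H(0,1,2)$ persists.

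Your proposal, by contrast, never commits to a definite $\str{T}$, $f$, or $f'$ (``for instance'', ``possibly together with''), and your anticipated ``main obstacle''---a case analysis over all relations $g'$ might place between $w_1$ and $3$---is left entirely unexecuted. Worse, your choice of $\str{B}$ already has two outgoing $E$-edges from vertex $1$, which makes verifying that your $f$ and $f'$ are $\mathcal K$-type-respecting delicate: any $\str{A}'$ extending $\str{A}$ by a vertex of type $E(1,t_0)$ must be embedded into some extension of $\str{B}$ without creating a third $E$-edge from $1$ together with a compatible $H$-tuple, and you do not check this. The two-vertex test extension $v_0,v_1$ you sketch is also more than is needed; the paper gets away with a single test structure of three vertices total. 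In short, the $t_0$-identification in $\str{B}^+$ is the lever that eliminates the case analysis you are bracing for, and once you use it the example collapses to the minimal one in the paper.
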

\begin{proof}
	We give an explicit failure of type-respecting amalgamation showing that the use of Observation~\ref{obs:binary} in the previous proof is essential. Let $\str{A}$ be the empty $L'$-structure, $\str{B}$ be $L'$-structure with $B=\{0\}, E_\str{B}=H_\str{B}=\emptyset$ and $\str{B}'$ be $L'$-structure with $B'=\{0,1\}$, $E_{\str{B}'}=\{(0,1)\}$, $H_{\str{B}'}=\emptyset$.
	Let $\str{T}_\str{A}$ be the unique weak type extending $\str{A}$.  Let $\str{T}_\str{B}$ be weak type extending $\str{B}$ with $E_{\str{T}_\str{B}}=H_{\str{T}_\str{B}}=\emptyset$ and $\str{T}'_\str{B}$ weak type extending $\str{B}$ with $E_{\str{T}'_\str{B}}=\emptyset$ and $H_{\str{T}'_\str{B}}=\{(0,t_0,t_1)\}$. Notice that $\str{T}_\str{B}$ and $\str{T}'_\str{B}$ agree as 1-types and thus in $\str{B}^+$ their vertices $t_0$ are identified. Finally, let $\str{T}_{\str{B}'}$ and $\str{T}'_{\str{B}'}$ be weak types extending $\str{B}'$ with $E_{\str{T}_{\str{B}'}}=H_{\str{T}'_{\str{B}'}}=\{(0,1),(1,t_0)\}$, $H_{\str{T}_{\str{B}'}}=\emptyset$, $H_{\str{T}'_{\str{B}'}}=\{(0,t_0,t_1)\}$. Again $\str{T}_{\str{B}}$ and $\str{T}_{\str{B}'}$ agree as 1-types.
	Now let $f\colon\str{A}^+\to \str{B}^+$ map $\str{T}_\str{A}$ to $\str{T}_\str{B}$ and $f^+\colon\str{A}^+\to \str{B}^+$ map  $\str{T}_\str{A}$ to $\str{T}_{\str{B}'}$. It is easy to check that these are $\mathcal K$-type-respecting. $g\colon\str{B}^+\to \str{B}'^+$ can be constructed to be type-respecting by mapping type $\str{T}_\str{B}$ to $\str{T}_{\str{B}'}$ and $\str{T}'_\str{B}$ to $\str{T}'_{\str{B}'}$. However there is no $\mathcal K$-type-respecting $g'\colon\str{B}^+\to\str{B}'^+$. To see that, observe that any image of $\str{T}'_\str{B}$ must agree as 1-type with $\str{T}_{\str{B}'}$ and consider $\str{A'}$ with $A'=\{0,1,2\}$ and $H_{\str{A}'}=\{(0,1,2)\}$. $\str{A}$ is an initial segment of $\str{A}'$ and there is no way to extend $g'$ to a $\mathcal K$-type-respecting embedding of $\str{A}'$ to some $L$-structure in $\mathcal K$ since it will always add vertex $v$ after vertex $0$ of $\str{A}$ in a way that there is a monomorphism from $\str{F}$ to  $\{0,v,1,2\}$.
\end{proof}
We conjecture that the answers to Problems~\ref{prob:infinite} and~\ref{prob:badclique} are in fact negative. It is possible that by concentrating on type-respecting embeddings, the study of big Ramsey degrees can find a proper setting.

\paragraph{\bf{Acknowledgement}}
First six authors are supported by the project 21-10775S of  the  Czech  Science Foundation (GA\v CR). This article is part of a project that has received funding from the European Research Council (ERC) under the European Union's Horizon 2020 research and innovation programme (grant agreement No 810115).
\bibliographystyle{plain}
\bibliography{ramsey.bib}
\end{document}